\theoremstyle{plain}
\newtheorem{thm}{Theorem}
\newtheorem{prop}[thm]{Proposition}
\newtheorem{lem}[thm]{Lemma}
\newtheorem*{claim*}{Claim}
\newtheorem*{question}{Question}
\theoremstyle{definition}
\newtheorem{defn}[thm]{Definition}
\newtheorem{rmk}[thm]{Remark}
\newcommand{\N}{\mathbb{N}}
\newcommand{\F}{\mathbb{F}}
\newcommand{\Z}{\mathbb{Z}}
\newcommand\aut{\operatorname{\mathsf{Aut}}}
\newcommand\rist{\operatorname{\mathsf{Rist}}}
\newcommand\stab{\operatorname{\mathsf{Stab}}}
\newcommand\tree{{\mathcal T}}
\title{Virtual first Betti number of GGS groups}
\author{Andrew Ng}
\begin{document}

\maketitle

\begin{abstract}
We observe a criterion for groups to have vanishing virtual first Betti number and use it to give infinitely many examples of torsion-free, finitely generated, residually finite groups which aren't virtually diffuse. This answers a question raised by Kionke and Raimbault.
\end{abstract}

\section{Introduction}
The \emph{Kaplansky unit conjecture} is the conjecture that for an integral domain $R$ and a torsion-free group $G$, any unit in the group ring $R[G]$ is of the form $rg$, for some $r \in R^{\times}$ and $g \in G$.
\begin{defn}
    A group $G$ has the \emph{unique product property} (or ``has unique products'', or ``has UP'') if for all non-empty finite subsets $A, B \subseteq G$ there exists $g \in G$ such that $g = ab$ for a unique pair $(a, b) \in A \times B$. 
    A group $G$ has the \emph{two unique products property} if for all finite subsets $A, B \subseteq G$ with $|A| |B| \geq 2$, there exist elements $g_0 \neq g_1$ of $G$ such that $g_0 = a_0 b_0$ for a unique pair $(a_0, b_0) \in A \times B$ and $g_1 = a_1 b_1$ for a unique pair $(a_1, b_1) \in A \times B$.

\end{defn}
In \cite{St80} it is shown that a group has the two unique products property if and only if it has the unique product property. It follows that a group which has UP satisfies the unit conjecture. In fact, all known methods for showing that a group satisfies the unit conjecture rely on showing that the group has unique products. The Kaplansky unit conjecture was disproved in \cite{Ga21}. 

\begin{defn}
    Let $A \subset G$ be a finite subset.
    An element $a \in A$ is called \emph{extremal} if for all $1 \neq s \in G$, either $as \notin A$ or $a s^{-1} \notin A$ (or both).
    A group is called \emph{diffuse} if every non-empty finite subset contains an extremal element.
\end{defn}

This property was introduced in \cite{Bow00}. Note that groups which are either diffuse or have the unique product property must be torsion-free, and in fact all diffuse groups have unique products \cite{Bow00}.

It is shown in \cite{Del97} that all residually finite hyperbolic groups are virtually diffuse.
Later work of \cite{KR16} showed that lattices in the rank 1 Lie groups $SO(n,1), SU(n,1),$ and $Sp(n,1)$ are virtually diffuse. They also provide a proof that all 3-manifold groups are virtually diffuse, which leads them to ask whether there exists a torsion-free, finitely generated, residually finite group which isn't virtually diffuse. We answer this in the affirmative:
\begin{thm}\label{notvirtdiff}
   For each odd prime $p$ there is a torsion-free, finitely generated, residually finite group whose finite quotients are all $p$ groups, which isn't virtually diffuse.
\end{thm}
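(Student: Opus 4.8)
The plan is to produce, for each odd prime $p$, a torsion-free finite-index subgroup $G$ of a carefully chosen GGS group on the $p$-regular rooted tree $\tree_p$, and to show that $G$ is not virtually diffuse by combining two facts about $G$: it is amenable, and it has vanishing virtual first Betti number, i.e.\ every finite-index subgroup has finite abelianization. The bridge between these and diffuseness is that a nontrivial finitely generated amenable diffuse group is locally indicable, hence surjects onto $\Z$; thus an infinite, finitely generated, amenable group in which every finite-index subgroup has finite abelianization cannot contain a finite-index diffuse subgroup, so it is not virtually diffuse.

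\textbf{The group (and the main obstacle).} Fix an odd prime $p$ and take a GGS group $\Gamma=\Gamma_e=\langle a,b\rangle\le\aut(\tree_p)$ whose defining vector $e$ is chosen so that $\Gamma$ is a regular branch group with the congruence subgroup property and is non-periodic; concretely, one may require $e\neq 0$, $e$ not a scalar multiple of $(1,\dots,1)$, and $\sum_i e_i\not\equiv 0 \pmod p$, the last condition forcing non-periodicity by Vovkivsky's theorem. The congruence subgroup property identifies the profinite completion of $\Gamma$ with its closure in $\aut(\tree_p)$, which lies in a Sylow pro-$p$ subgroup; hence every finite quotient of $\Gamma$, and --- by passing to normal cores --- of every subgroup of $\Gamma$, is a $p$-group. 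I would take $G:=\Gamma'$, or a deeper term $\gamma_k(\Gamma)$ of the lower central series if necessary. Establishing that $G$ is torsion-free is the step I expect to be the main obstacle: $\Gamma$ carries a lot of torsion ($a$ and $b$ have order $p$) and branch groups need not be virtually torsion-free, so one must invoke the known description of the orders of elements of non-periodic GGS groups to see that every finite-order element of $\Gamma$ maps nontrivially to $\Gamma^{\mathrm{ab}}$ and hence lies outside $\Gamma'$; and if $\Gamma'$ should still meet a finite subgroup, then, since $\Gamma$ is residually-$p$ (hence residually nilpotent) with only finitely many conjugacy classes of finite subgroups in the non-periodic case, a sufficiently deep $\gamma_k(\Gamma)$ is torsion-free. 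Either way $G$ is finitely generated, residually finite, torsion-free, with all finite quotients $p$-groups.

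\textbf{Vanishing virtual first Betti number.} Here I would apply the paper's criterion: a finitely generated regular branch group with the congruence subgroup property and finite abelianization has vanishing virtual first Betti number. The proof is a descending branching recursion on a finite-index subgroup $H\le\Gamma$: the congruence subgroup property gives $H\supseteq\stab_\Gamma(n)\supseteq\rist_\Gamma(n)=\prod_{|v|=n}\rist_\Gamma(v)$ for some $n$, where $\rist_\Gamma(n)$ is normal in $\Gamma$ of finite index and each $\rist_\Gamma(v)$ is isomorphic to a finite-index subgroup of $\Gamma$; over $\Q$ one has $H_1(H;\Q)\cong H_1(\rist_\Gamma(n);\Q)_H$, and the regular-branch identity $\psi(K)\supseteq K\times\cdots\times K$ together with finiteness of $\Gamma^{\mathrm{ab}}$ forces the derived subgroup of every finite-index subgroup to contain a rigid level stabilizer, hence to have finite index, so all the first Betti numbers vanish. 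Since the virtual first Betti number is a commensurability invariant, $vb_1(G)=vb_1(\Gamma)=0$.

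\textbf{Amenability and conclusion.} The group $\Gamma$ is generated by bounded automata in Sidki's sense --- $a$ is finitary and $b$ has activity one at every level, along a single ray --- so $\Gamma$, and hence its finite-index subgroup $G$, is amenable by Bartholdi--Kaimanovich--Nekrashevych. If $G$ were virtually diffuse, a finite-index diffuse subgroup $H\le G$ would be an infinite, finitely generated, amenable diffuse group, hence locally indicable, hence would surject onto $\Z$, giving $b_1(H)\ge 1$ and contradicting $vb_1(G)=0$. Therefore $G$ is a torsion-free, finitely generated, residually finite group whose finite quotients are all $p$-groups and which is not virtually diffuse; carrying this out for every odd prime $p$ --- the resulting groups being pairwise non-isomorphic, since $p$ is the unique prime dividing the orders of their finite quotients --- produces infinitely many such examples.
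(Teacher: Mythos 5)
Your overall strategy is the same as the paper's second proof: choose a GGS group with the congruence subgroup property, pass to a torsion-free finite-index subgroup, show every finite quotient is a $p$-group so the virtual first Betti number vanishes, and combine amenability (bounded automata) with the Linnell--Witte Morris theorem to rule out a diffuse finite-index subgroup. The amenability and $vb_1$ steps are fine (indeed the paper's route to $vb_1=0$ is even simpler than your branching recursion: once all finite quotients are $p$-groups, a surjection of a finite-index subgroup onto $\Z$ would produce a finite quotient of order divisible by a prime $q\neq p$).

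The genuine gap is exactly where you predicted it: torsion-freeness. The conditions you impose on the defining vector ($e\neq 0$, $e$ not a scalar multiple of $(1,\dots,1)$, $\sum_i e_i\not\equiv 0 \bmod p$) do \emph{not} guarantee that $\Gamma$ is virtually torsion-free. The generalised Fabrykowski--Gupta vectors $(1,0,\dots,0)$ with $p\geq 5$ satisfy all three conditions, yet by Fern\'andez-Alcober--Garrido--Uria-Albizuri (Proposition 4.1 of their CSP paper) these groups are not virtually torsion-free, so no choice of $G$, whether $\Gamma'$ or a deeper $\gamma_k(\Gamma)$, can work for them. Your fallback argument also does not close the gap: the claim that a non-periodic GGS group has only finitely many conjugacy classes of finite subgroups is unsubstantiated, and it must fail for the examples just mentioned, since your argument would otherwise prove they are virtually torsion-free. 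The fix is to restrict to the specific family the paper uses: the vectors $(1,\dots,1,\lambda)$ with $\lambda\neq 1,2$, for which Theorem D of the same reference shows the group is just infinite and branch over its derived subgroup, and that this derived subgroup is torsion-free. With that substitution (and the derived subgroup's finite abelianisation, which gives the finite index you need for commensurability), the rest of your argument goes through as written.
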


\section{Vanishing virtual first Betti number}
In this section we observe a criterion for the virtual first Betti number of a group to vanish.
\begin{lem} \label{vb1}
    Let $q$ be a prime number and let $G$ be a finitely generated group whose finite quotients are all of order prime to $q$. Then $G$ has vanishing virtual first Betti number.
\end{lem}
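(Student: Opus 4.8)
The plan is to prove the contrapositive in sharpened form: if some finite-index subgroup of $G$ has positive first Betti number, then $G$ has a finite quotient of order divisible by $q$. Write $b_1(H) = \dim_\Q \operatorname{Hom}(H,\Q)$ for the first Betti number (the torsion-free rank of $H^{\mathrm{ab}}$), so that $G$ having vanishing virtual first Betti number means $b_1(H) = 0$ for every finite-index $H \le G$. The first step is a reduction to normal subgroups: any finite-index $H \le G$ contains its normal core $N = \bigcap_{g\in G} gHg^{-1} \trianglelefteq G$, still of finite index, and restriction of $\Q$-valued homomorphisms from $H$ to $N$ is injective (a homomorphism to the torsion-free group $\Q$ that vanishes on a finite-index subgroup vanishes identically), so $b_1(N) \ge b_1(H)$. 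Hence it suffices to show $b_1(N) = 0$ for every finite-index normal $N \trianglelefteq G$.

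Next, suppose toward a contradiction that $N \trianglelefteq G$ has finite index and $b_1(N) \ge 1$. Since $G$ is finitely generated, $N$ is finitely generated, so $N^{\mathrm{ab}}$ is a finitely generated abelian group of positive torsion-free rank; consequently $N^{\mathrm{ab}} \otimes \Z/q\Z$ is a nonzero finite $\F_q$-vector space, of dimension at least $b_1(N) \ge 1$. Set $M := [N,N] N^q$, the fully invariant (hence characteristic) subgroup of $N$ generated by commutators and $q$-th powers; it equals the preimage of $q\,N^{\mathrm{ab}}$ under $N \twoheadrightarrow N^{\mathrm{ab}}$, so $N/M \cong N^{\mathrm{ab}} \otimes \Z/q\Z$. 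Because $N \trianglelefteq G$ and $M$ is characteristic in $N$, we have $M \trianglelefteq G$.

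Finally, $G/M$ is finite, with $[G:M] = [G:N]\cdot|N/M|$, and $q$ divides $|N/M| = q^{\dim_{\F_q}(N^{\mathrm{ab}}\otimes\Z/q\Z)}$; thus $G/M$ is a finite quotient of $G$ whose order is divisible by $q$, contradicting the hypothesis. This contradiction shows $b_1(N) = 0$ for all finite-index normal $N$, hence $G$ has vanishing virtual first Betti number. The argument is short, so there is no serious obstacle; the one point requiring care is that a finite-index subgroup witnessing positive $b_1$ need not be normal in $G$, and the mod-$q$ abelianization quotient built from it need not be a quotient of $G$ — passing to the normal core (and observing this does not lower $b_1$) is precisely what repairs this and yields a genuine finite quotient of $G$.
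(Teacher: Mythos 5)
Your proof is correct and reaches the paper's conclusion by essentially the same contradiction: a finite-index subgroup with positive first Betti number is parlayed into a finite quotient of $G$ whose order is divisible by $q$. The only difference is mechanical --- the paper composes $H \twoheadrightarrow \Z \twoheadrightarrow \Z/q\Z$ and then passes to the normal core of the kernel, whereas you pass to the normal core of $H$ first (checking that $b_1$ does not drop) and then use the characteristic subgroup $[N,N]N^q$ to obtain normality in $G$; both routes are equally elementary and equally valid.
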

\begin{proof}
    Suppose $G$ has a finite index subgroup $H$ with a surjective map $\phi: H \to \Z$. This gives a map $\phi_1: H \to \Z/q\Z$. The kernel of this map $N$ is a finite index subgroup of $G$, so contains a subgroup $N_1$ which is a finite index normal subgroup of $G$. $|G/N_1|$ is then divisible by $q$, which is a contradiction.
\end{proof}
\begin{defn}
A group $G$ is amenable if there exists a finitely additive translation in
variant probability measure on all subsets of $G$.     
\end{defn}
\begin{defn}
    A group is said to be \emph{locally indicable} if any non-trivial finitely generated subgroup surjects $\Z$.
\end{defn}

\begin{thm} \label{lidiff}   \cite[Theorem 6.4]{LW14}
    An amenable group is locally indicable if and only if it is diffuse.
\end{thm}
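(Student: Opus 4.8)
The statement is an equivalence, and only one implication uses amenability. For ``locally indicable $\Rightarrow$ diffuse'' I would argue directly, with no appeal to amenability, by induction on $|A|$ for a finite subset $A\subseteq G$. The case $|A|\le 1$ is immediate. For $|A|\ge 2$, fix $a_0\in A$ and set $K=\langle a_0^{-1}a : a\in A\rangle$; this is a nontrivial finitely generated subgroup, so local indicability gives a surjection $\phi\colon K\twoheadrightarrow\Z$. Since $A\subseteq a_0K$, the function $h(a)=\phi(a_0^{-1}a)$ is defined on $A$, and because $\phi(1)=0$ while $\phi$ is nonzero on the generating set $a_0^{-1}A$, the function $h$ is non-constant; hence the set $A^{\top}$ of elements of $A$ maximizing $h$ is a proper nonempty subset of $A$. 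By induction $A^{\top}$ has an extremal element $b$, and I would then check $b$ is extremal in $A$: if $bs, bs^{-1}\in A$ with $s\ne 1$, comparing cosets of $K$ shows $s\in K$, and then $h(bs)=h(b)+\phi(s)$ and $h(bs^{-1})=h(b)-\phi(s)$ are both $\le\max h$, forcing $\phi(s)=0$, so $bs,bs^{-1}\in A^{\top}$, contradicting extremality of $b$ there.

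For the converse, ``amenable and diffuse $\Rightarrow$ locally indicable'', I would first reduce to the claim that a nontrivial finitely generated amenable diffuse group $G$ surjects onto $\Z$; this suffices since subgroups of amenable (resp.\ diffuse) groups are again amenable (resp.\ diffuse). Next I would repackage diffuseness as the existence of a \emph{locally invariant order}: a total order $\prec$ on $G$ --- not assumed translation invariant --- such that for all $x\in G$ and $s\ne 1$ one has $x\prec xs$ or $x\prec xs^{-1}$. Given such an order the $\prec$-maximum of any finite set is automatically extremal; conversely, for countable (in particular finitely generated) $G$ one produces such an order from diffuseness by a compactness argument: greedily peeling extremal elements off the top shows every finite subset admits a ``good'' order, these restrict compatibly, and K\"onig's lemma yields a good order on all of $G$.

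The crux is then to manufacture, from the locally invariant order together with amenability, a nontrivial homomorphism $G\to\R$; as $G$ is finitely generated its image is a nontrivial finitely generated subgroup of $\R$ and hence surjects onto $\Z$. Following the left-orderable case treated in \cite{LW14}, the plan is to let $G$ act order-preservingly on a line-like completion of $(G,\prec)$, use amenability --- after passing to a suitable compactification --- to obtain a $G$-invariant finitely additive measure, and integrate the displacement to produce a translation-number homomorphism $\tau\colon G\to\R$. If $\tau\not\equiv 0$ we are done. The main obstacle, and the real content of \cite{LW14}, is the degenerate case $\tau\equiv 0$, in which every element behaves as though it has a fixed point: here one must run a Burns--Hale-style induction, passing to a proper convex subgroup that inherits a locally invariant order, to extract the desired $\Z$ quotient. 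A further subtlety is that a merely locally invariant order does not furnish a genuine action on $\R$ the way a left order does, so the dynamical picture above must be set up with extra care (or replaced by a direct F\o lner-set estimate).
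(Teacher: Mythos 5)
First, note that the paper does not prove this statement at all: it is imported verbatim as \cite[Theorem 6.4]{LW14}, so there is no in-paper argument to compare against. Your proof of the implication ``locally indicable $\Rightarrow$ diffuse'' is complete and correct, and nicely elementary: the induction on $|A|$, the observation that $\phi$ cannot vanish on the whole generating set $a_0^{-1}A$ of $K$ (else it would not surject onto $\Z$), and the verification that an extremal element of the level set $A^{\top}$ stays extremal in $A$ (via $s\in K$ and $\phi(s)=0$) all go through. You are also right that amenability is irrelevant to this direction.

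The gap is in the converse. What you have written for ``amenable and diffuse $\Rightarrow$ locally indicable'' is a roadmap of the Linnell--Witte-Morris argument rather than a proof: the two essential steps --- manufacturing a nontrivial homomorphism $G\to\R$ from a locally invariant order together with amenability, and disposing of the degenerate case $\tau\equiv 0$ --- are named but not carried out, and you concede that this is ``the real content of \cite{LW14}.'' The difficulty you flag at the end is genuine, not cosmetic: a locally invariant order is not left-invariant, so it does not give a dynamical realization of $G$ as a group of order-preserving homeomorphisms of a line, and the translation-number construction cannot be set up as you describe; the actual argument in \cite{LW14} has to work combinatorially (averaging over F\o lner sets) precisely because the dynamical picture is unavailable. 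As it stands, the reduction steps (passing to finitely generated subgroups, extracting a locally invariant total order by compactness) are fine, but the heart of the theorem is still an appeal to the reference. Given that the paper itself only cites the result, that is an acceptable stance for a citation, but the proposal should not be read as a self-contained proof of the ``only if'' direction.
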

Note that the authors of \cite{LW14} call groups satisfying the above definition of diffuse 'weakly diffuse' and show that for all groups it is in fact equivalent to the stronger property of having two extremal elements.

\section{Groups acting on rooted trees}

In this section we give a brief introduction to groups acting on rooted trees, following the exposition of \cite{BG02}, and some commonly studied properties which will be used in the proof of \cref{notvirtdiff}.

Let $\Sigma$ be a finite set and consider the set $\Sigma^*$ of words in $\Sigma$. One can form a 
regular rooted tree $\tree$ where vertices correspond to elements of $\Sigma^*$ and there is an edge between vertices representing words $w_1$ and $w_2$ if and only if for some $\sigma \in \Sigma$, $w_1= w_2 \sigma$ or $w_2= w_1 \sigma$. Note the root is the empty word. 

Fix some $\Sigma$, and consider any subgroup $G<\aut(\tree_{\Sigma})$ of the group of automorphisms of the rooted tree. Let
$\stab_G(\sigma)$, the \emph{vertex stabiliser} of $\sigma$, denote the
subgroup of $G$ consisting of the automorphisms that fix the sequence $\sigma$, and let $\stab_G(n)$, the \emph{level stabiliser}, denote the subgroup of $G$ consisting of the automorphisms that fix all sequences
of length $n$:
\[\stab_G(\sigma)=\{g\in G|\,g\sigma=\sigma\},\qquad\stab_G(n)=\bigcap_{\sigma\in\Sigma^n}\stab_G(\sigma).\]
The $\stab_G(n)$ are normal subgroups of finite index of $G$; in
particular, letting $d:=|\Sigma|$, $\stab_G(1)$ is of index at most $d!$. Now note that the intersection of all level stabilisers is trivial, so $\aut(\tree_{\Sigma})$ and all its subgroups are residually finite. Let $G_n$ be the
quotient $G/\stab_G(n)$. If $g\in\aut(\tree_{\Sigma})$ is an automorphism fixing
the sequence $\sigma$, we denote by $g_{|\sigma}$ the element of
$\aut(\tree_{\Sigma})$ corresponding to the restriction to sequences starting
by $\sigma$:
\[\sigma g_{|\sigma}(\tau)=g(\sigma\tau).\]
As the subtree starting from any vertex is isomorphic to the initial
tree $\tree_\Sigma$, we obtain this way a map
\begin{equation}\label{eq:phi}
  \phi:\begin{cases}\stab_{\aut(\tree_{\Sigma})}(1)\to\aut(\tree_{\Sigma})^\Sigma\\
    h\mapsto (h_{|0},\dots,h_{|d-1})\end{cases}
\end{equation}
which is an embedding.  For a sequence $\sigma$ and an automorphism
$g\in\aut(\tree_{\Sigma})$, we denote by $g^\sigma$ the element of
$\aut(\tree_{\Sigma})$ acting as $g$ on the sequences starting by $\sigma$, and
trivially on the others:
\[g^\sigma(\sigma\tau)=\sigma g(\tau),\qquad g^\sigma(\tau)=\tau\text{ if }\tau\text{ doesn't start by }\sigma.\]
The \emph{rigid stabiliser} of $\sigma$ is
$\rist_G(\sigma)=\{g^\sigma|\,g\in G\}\cap G$. We also set
\[\rist_G(n)=\langle\rist_G(\sigma)|\,\sigma\in\Sigma^n\rangle=\prod_{|\sigma|=n}\rist_G(\sigma)\]
and call it the \emph{rigid level stabiliser} ($\prod$ denotes direct
product).  We say $G$ has \emph{infinite rigid stabilisers} if all
the $\rist_G(\sigma)$ are infinite.

\begin{defn} Let $G$ be a subgroup of $\aut(\tree_{\Sigma})$
\begin{itemize}
\item  $G$ is \emph{spherically transitive} if the
  action of $G$ on $\Sigma^n$ is transitive for all $n\in\N$.
    \item   $G$ is \emph{fractal} if for every vertex $\sigma$ of
  $\tree_\Sigma$ one has $\stab_G(\sigma)_{|\sigma}\cong G$, where the
  isomorphism is given by identification of $\tree_\Sigma$ with its
  subtree rooted at $\sigma$.
\item 
  $G$ has the \emph{congruence subgroup property} if every finite-index
  subgroup of $G$ contains $\stab_G(n)$ for some $n$ large enough.
\end{itemize}

\end{defn}
From now on assume $G$ acts spherically transitively.
\begin{defn}
      $G$ is a \emph{regular branch} group if it has a finite-index
    normal subgroup $N<\stab_G(1)$ such that
    \[N^\Sigma<\phi(N).\]
    It is then said to be \emph{regular branch over $N$}.
\end{defn}
\begin{defn}
     An automorphism $g \in \aut(\tree_{\Sigma})$ is \emph{finite state} if $S(g) := \{g|_v : v  \in \Sigma^*\}$ is finite. The element $g$ is \emph{bounded} if there is $N \geq 0$ such that for all $n \geq 1$, $|\{v \in \Sigma^n :  g|_v \neq 1\}| \leq N.$
\end{defn}
\begin{defn}
    A group $G \leq \aut(\tree_{\Sigma})$ is \emph{self-similar} if $g|_w \in G$ for
 all $w \in \Sigma^*$ and $g \in G$.
\end{defn}
\begin{defn}
    A group $G \leq \aut(\tree_{\Sigma})$ is said to be a \emph{bounded automata group} if $G$ is finitely generated and self-similar and every element $g \in G$ is bounded and finite state.
\end{defn}
By combining results of \cite{BKN10} and \cite{Ne05} all bounded automata groups are amenable.

\section{GGS groups}
In this section we introduce the Grigorchuk-Gupta-Sidki groups and prove \cref{notvirtdiff}. We will give two proofs, one which does not rely on CSP and one which does. 
Fix a prime $p$ and a non-zero vector $(e_1, e_2, \dots, e_{p-1}) \in \F_p^{p-1}$. 
Let $a:=(1,2,\dots, p)$ denote the element that rigidly permutes the subtrees according to the cyclic permutation. Equivalently, viewing the tree as words in the alphabet $\{x_1 \dots, x_p\}$, any non-empty word $w$ is of the form $x_iw_1$ for some $w_1$, and, taking indices mod $p$, $a(w) := x_{i+1}w_1$.
Define the tree automorphism $b$ recursively by $b:=<a^{e_1}, a^{e_2}, \dots a^{e_{p-1}},b>$. This means that on the $i^{\text{th}}$ subtree, the automorphism in the $i^{\text{th}}$ entry is applied. 
\\
The Grigorchuk-Gupta-Sidki group defined by $p$ and $(e_1, e_2, \dots, e_{p-1})$ is the group generated by the two elements $a$ and $b$. This class includes the Fabrykowski-Gupta group, introduced in \cite{FG91} as one of the first examples of a group of intermediate growth, and the Gupta-Sidki groups, introduced in \cite{GS83} as negative solutions to the Burnside problem.

\begin{proof} [First proof of \cref{notvirtdiff}]
In \cite[Theorem D]{FGU17} the authors give a criterion for a GGS group to be virtually torsion-free and show that for any $\lambda \neq 1,2$, the GGS group $G$ defined by the vector $(1, \dots , 1, \lambda)$ is just infinite and branch over its derived subgroup, which is torsion-free. 

By \cite[Theorem 5.2]{BGS05} the derived subgroup has finite abelianisation, hence isn't locally indicable. We show that the groups $G$ aren't virtually diffuse by directly appealing to the branch structure. Suppose the branching subgroup $K$ isn't diffuse, equivalently locally indicable. Since $d\geq 2$, each level contains an embedded copy of $K \times K$ Select a copy of $$K \times K$$ at the first level; the second copy of $K \times K$ then contains a subgroup isomorphic to $K \times K$. One may then iterate the construction using a copy of $K \times K$ from this subgroup. The copies of $K$ selected at each level then generate their infinite direct sum which embeds as a subgroup of $G$.
From the definition of GGS groups it follows that all such $G$ are bounded automata groups and hence are amenable, so cref{lidiff} then applies to show $K$ isn't diffuse and its infinite direct sum isn't virtually diffuse.  Since they are subgroups of the profinite group $\aut(\tree_{\Sigma})$, they are residually finite.
For completeness we include a proof that the infinite direct sum isn't virtually diffuse below. Since being virtually diffuse passes to subgroups, $G$ can't be virtually diffuse, so neither can any of its finite index torsion-free subgroups. This approach doesn't require the congruence subgroup property. 

\end{proof} 

\begin{prop}
    Let $G$ be a finitely generated non-diffuse group. Fix an infinite indexing set $I$ and for each $i \in I$ let $G_i$ be a group isomorphic to $G$. Then the infinite direct sum $S := \bigoplus_{I} G_i$ is not virtually diffuse.
\end{prop}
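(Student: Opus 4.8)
The plan is to reduce everything to the following statement: every finite-index subgroup $H\le S$ contains a subgroup isomorphic to $G$. Granting this, we are done: ``diffuse'' immediately passes to subgroups (a nonempty finite subset having an extremal element in the ambient group has one in any subgroup containing it), so $H$, containing a non-diffuse subgroup, is itself non-diffuse; as $H$ was an arbitrary finite-index subgroup, $S$ is not virtually diffuse. The point requiring care is that $H$ need not contain any of the summands $G_i$ intact --- for instance the kernel of $\bigoplus_{\N}\Z/2\Z\to\Z/2\Z$, $(x_n)\mapsto\sum_n x_n$, contains no coordinate subgroup --- so the copy of $G$ inside $H$ must be built by hand.

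To build it, first replace $H$ by its normal core $H_0:=\bigcap_{s\in S}sHs^{-1}\trianglelefteq S$, a finite-index normal subgroup with $H_0\le H$; put $Q:=S/H_0$ and let $q\colon S\to Q$ be the projection. Fix isomorphisms $\iota_i\colon G\xrightarrow{\ \sim\ }G_i$ and form the homomorphisms $\phi_i:=q\circ\iota_i\colon G\to Q$. Since $G$ is finitely generated and $Q$ is finite, $\operatorname{Hom}(G,Q)$ is finite, so --- $I$ being infinite --- by pigeonhole there are an infinite $J\subseteq I$ and a single $\phi\colon G\to Q$ with $\phi_i=\phi$ for all $i\in J$. (This is the only place finite generation is used.) Now fix $e\ge 1$ with $y^e=1$ for every $y\in Q$, e.g. $e=|Q|$, choose distinct $i_1,\dots,i_e\in J$, and let $\Delta\le S$ be the image of the diagonal homomorphism $G\to G_{i_1}\times\cdots\times G_{i_e}\le S$, that is, $\Delta=\{\iota_{i_1}(g)\cdots\iota_{i_e}(g):g\in G\}$. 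Two routine checks complete the construction: $\Delta\cong G$, since the diagonal map is injective (distinct summands of $S$ meet trivially) and is a homomorphism (distinct summands commute); and $\Delta\subseteq H_0$, since $q\big(\iota_{i_1}(g)\cdots\iota_{i_e}(g)\big)=\phi_{i_1}(g)\cdots\phi_{i_e}(g)=\phi(g)^e=1$ for all $g$. Hence $\Delta$, isomorphic to $G$, lies inside $H_0\le H$, as required.

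For completeness, the final reduction unwound: choose a finite $B\subseteq G$ containing no extremal element (necessarily $|B|\ge 2$, as singletons always contain one) and, for each $b\in B$, an element $1\ne s_b\in G$ with $bs_b,bs_b^{-1}\in B$; pushing $B$ and the $s_b$ into $H$ through the isomorphism $G\cong\Delta\le H$ yields a finite subset of $H$ of size $\ge 2$ none of whose elements is extremal in $H$. I expect the main --- indeed essentially the only --- conceptual obstacle to be the one flagged above, that a finite-index subgroup can miss every summand; the $e$-fold diagonal over coordinates carrying a common map to $Q$ is precisely the device that sidesteps it. Note the argument is entirely combinatorial and invokes neither amenability nor \cref{lidiff}.
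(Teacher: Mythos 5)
Your proof is correct and follows essentially the same route as the paper's: reduce to a finite-index normal subgroup (kernel of a map to a finite group $Q$), pigeonhole the finitely many homomorphisms $G\to Q$ to find infinitely many coordinates inducing the same map, and embed $G$ diagonally across $|Q|$ such coordinates so that the image dies in $Q$. You spell out a couple of details the paper leaves implicit (injectivity of the diagonal and the computation $\phi(g)^{|Q|}=1$), but the argument is the same.
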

\begin{proof}
    Since any finite index subgroup $H$ contains a finite index normal subgroup $N$, it suffices to show that, for any finite group $Q$, the kernel $N$ of any homomorphism $\phi: S \to Q$ contains a copy of $G$. Since $G$ is finitely generated, there are only finitely many distinct homomorphisms $\psi_1, \dots \psi_k$ from $G$ to $Q$. Hence, there is some $\psi_j$ such that for infinitely many $i \in I$, $\phi|_{G_i} = \psi_j$.
    Choose distinct indices $i_1 \dots i_{|Q|}$ so that the induced homomorphism on $G_{i}$ is the same. Then the diagonal embedding $G \to \bigoplus_{k=1}^{|Q|} G_{i_k} \leq S$ has image in $N$. 
\end{proof}

It is worth noting, however, that the infinite free product $F := *_{I}  G_i$ \emph{is} virtually diffuse if $G$ is virtually diffuse, since it embeds in the virtually diffuse group $G * \Z$.
We now give a proof which uses CSP.
\begin{proof} [Second proof of \cref{notvirtdiff}]
As above we consider the GGS group defined by the vector $(1, \dots , 1, \lambda)$ and show that its torsion-free derived subgroup is not virtually diffuse. 
The orders of all congruence quotients of a GGS group acting on a $p-$rooted tree were computed exactly in \cite[Theorem A]{FZ14}. In particular, they are all powers of $p$.
\cite[Theorem A]{FGU17} shows that these GGS groups have the congruence subgroup property, which passes to finite index subgroups, so the derived subgroups have it as well and all finite quotients have order a power of $p$. 
By \cref{vb1} the derived subgroups aren't virtually locally indicable. 
We conclude residual finiteness and the non-virtual diffuseness as in the previous proof.
\end{proof} 
\begin{rmk}
    This covers in particular the Fabrykowski-Gupta group, defined by the vector $(1,0)$ for $p=3$. However, the generalised Fabrykowski-Gupta groups, defined by the vector $(1, 0, \dots, 0)$ and any integer $d \geq 3$, are not virtually torsion-free \cite[Proposition 4.1]{FGU17}: they have the congruence subgroup property (at least when $p \geq 7$ is a prime), are branched over their commutator subgroups, and the commutator subgroups aren't torsion-free.
\end{rmk}

\begin{rmk}
    Baer's conjecture is the conjecture that a group ring is Noetherian if and only if the group is virtually polycyclic. \cite{KL19} shows that a group with Noetherian group ring is amenable and Noetherian. \cite{Fi24} shows that there can be no counterexamples to Baer's conjecture among virtually locally indicable groups. By \cref{lidiff}, any counterexample must be a non-(virtually diffuse) group. \cref{notvirtdiff} exhibits infinitely many torsion-free examples of such groups. However, all GGS groups defined by non-constant sequences are branch groups \cite{FGU17}, which are not Noetherian and hence cannot be a counterexample to Baer's conjecture.
\end{rmk}
\begin{rmk}
    In \cite{FZ14} it is shown that GGS groups with non-constant defining vector are branched over the third derived subgroup $\gamma_3(G)$. This is not explicitly mentioned in the proof of \cref{notvirtdiff}, but it is used in \cite{FGU17} to show that the derived subgroup is torsion-free. 
\end{rmk}

\section{Open questions}
In \cite{KR16} the authors also ask the following variant:
\begin{question}
       Is there a torsion-free, finitely presented, residually finite group which isn't virtually diffuse?
\end{question}
In \cite{BEH08} an ascending $L-$presentation for the Fabrykowski-Gupta group is given, which gives rise to an embedding of it into a finitely presented group by taking HNN extensions. One could attempt to give an $L-$presentation for its finite index torsion-free subgroup $K$ and hence an embedding of $K$ into a finitely presented, torsion-free group which isn't virtually diffuse. However, these are likely not residually finite. In the case of the Grigorchuk group non-residual finiteness of the HNN extension was shown in \cite{SW02}.

It is also open whether there is a unique product group which isn't diffuse, which gives rise naturally to
\begin{question}
    Do the torsion-free commutator subgroups of the GGS groups in \cref{notvirtdiff} have unique products?
\end{question}
Computational evidence suggests they do.

\subsection*{Acknowledgements} The author thanks his advisor Giles Gardam for his guidance. This work was supported by the ERC Grant Satisfiability and group rings: SATURN, 101076148.

\end{document}